\begin{document}
\title[quantitative uniqueness of higher order equations]
{Quantitative uniqueness of some higher order elliptic equations}
\author{Shanlin Huang, Ming Wang, Quan Zheng}

\address{Shanlin Huang \newline
Department of Mathematics\\
Huazhong University of Science and Technology\\
Wuhan 430074, China}
\email{shanlin\_huang@hust.edu.cn}

\address{Ming Wang \newline
Department of Mathematics\\
China University of Geosciences\\
Wuhan 430074, China}
\email{mwangcug@outlook.com}

\address{Quan Zheng \newline
Department of Mathematics\\
 Huazhong University of Science and Technology\\
 Wuhan 430074, China}
\email{qzheng@hust.edu.cn}

\begin{abstract}
We study the quantitative unique continuation property of some higher order elliptic operators. In the case of $P=(-\Delta)^m$, where $m$ is a positive integer, we derive lower bounds of decay at infinity for any nontrivial solutions under some general assumptions. Furthermore, in dimension  2, we can obtain  essentially sharp lower bounds for some forth order elliptic operators, the sharpness is shown by constructing  a  Meshkov type example.
\end{abstract}


\keywords{Quantitative uniqueness; higher order elliptic operators;
\hfill\break\indent  estimates of Carleman type}
\thanks{This research is supported by National Natural Science Foundation of China under grant numbers 11471129}

\maketitle
\numberwithin{equation}{section}
\newtheorem{theorem}{Theorem}[section]
\newtheorem{lemma}[theorem]{Lemma}
\newtheorem{proposition}{Proposition}[section]
\newtheorem{remark}[theorem]{Remark}
\newtheorem{definition}[theorem]{Definition}
\newtheorem{example}[theorem]{Examples}

\allowdisplaybreaks

\section{Introduction}
In this note, we are interested in the following quantitative unique continuation problem at infinity of some
higher order elliptic operators with constant coefficients. Assume $u$ satisfies
\begin{equation}\label{1.1}
P(D)u+Vu=0, \quad\text{in}~~ \mathbb{R}^{n},
\end{equation}
and
\begin{align}\label{1.2}
|V|\leq C, ~~|u|\leq C,~~ u(0)=1.
\end{align}
For large $R$, one can define
$$
M(R)=\inf_{|x_0|=R}\sup_{B(x_0,1)}|u(x)|
$$
 to measure the precise decay information at infinity of the solution, then a natural question is how small can $M(R)$ be ? We first briefly recall the second order case, where a related problem was originally studied by Landis in 1960's \cite{KL}. He conjectured that if \eqref{1.1} and \eqref{1.2} are satisfied for $P=\Delta$, and $u(x)\leq C \exp\{  -C|x|^{1+}\}$ for some constant, then u is identically zero. This conjecture was disproved by Meshkov \cite{M} who  constructed non-trivial bounded, complex-valued functions $u,V$ satisfying \eqref{1.1} and $u(x)\lesssim e^{-C|x|^{\frac43}}$. In 2005, Bourgain and Kenig \cite{BK} derived a quantitative version of Meshkov's result in their resolution of Anderson localization for the Bernoulli model. More precisely, they showed that if \eqref{1.1} and \eqref{1.2} are satisfied for $P=\Delta$, then $M(R)\gtrsim \exp\{-CR^{\frac43}\log{R}\}$. This lower bound is sharp in view of Meshkov's example. Later this result was extended to the following general case by Davey \cite{D}
\begin{equation}\nonumber
\begin{gathered}
-\Delta u+W\cdot\nabla u+Vu=\lambda u,\\
|V|\leq C\langle x\rangle^{-N},~~|W(x)|\leq C\langle x\rangle^{-P},~~ |u|\leq C,
\end{gathered}
\end{equation}
for some $P, N>0$, where $\langle x\rangle=\sqrt{1+|x|^2}$. See \cite{LW}  for generalizations to  more general second order elliptic equations.

Now we turn to the higher order case.  Weak and strong unique continuation properties for higher order elliptic equations have been studied by many authors, see e.g. \cite{W}, \cite{CG}, \cite{CGT}, \cite{L} and references therein. However, it seems that quantitative results for higher order operators are quite few. In a recent paper by Zhu \cite{Z}, he obtained vanishing order of solutions of polyharmonic equations by using the monotonicity property of a variant of frequency function, where its application to strong unique continuation problems was first observed by Garofalo and Lin \cite{GL}. As a corollary,  it was shown that for $P=(-\Delta)^m$, and if $u$ is a solution to \eqref{1.1} with $n\ge 4m$, then
$$
M(R)\gtrsim \exp\{-CR^{2m}\log{R}\}.
$$
We shall show that the condition $n\ge 4m$ is not necessary and the same bound $\frac43$ is still valid for power of Laplacian. Instead of using frequency function and Sobolev estimates, we improve this bound by noticing that a iteration of the Carleman estimates used in \cite{BK} will allow us to  follow Bourgain and Kenig's approach. Our first result is
\begin{theorem}\label{thm1.1}
Let $P=(-\Delta)^m$, and $u$ satifies \eqref{1.1} and \eqref{1.2}, then
$$
M(R)\gtrsim \exp\{-CR^{\frac43}\log{R}\}.
$$
\end{theorem}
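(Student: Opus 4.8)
The plan is to follow the argument of Bourgain and Kenig \cite{BK} for $P=\Delta$; the only genuinely new ingredient is a Carleman inequality adapted to $(-\Delta)^{m}$, which we will obtain by iterating the second-order estimate rather than by conjugating $(-\Delta)^{m}$ directly.

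First I would record a Carleman inequality for $(-\Delta)^{m}$. For $P=\Delta$ one has, with a suitable radial weight $w(x)\approx|x|$ and for every large $\beta$ outside a discrete forbidden set, an inequality of the shape
$$
\beta^{3}\int w^{-1-2\beta}|v|^{2}\,dx\;\le\;C_{0}\int w^{3-2\beta}|\Delta v|^{2}\,dx,\qquad v\in C_{c}^{\infty}(\mathbb{R}^{n}\setminus\{0\}).
$$
Setting $v_{j}=(-\Delta)^{j}u$ and using $-\Delta v_{j-1}=v_{j}$, I would apply this successively to $v_{0},v_{1},\dots,v_{m-1}$, lowering the parameter by $2$ at each step so that the right-hand exponent of one application matches the left-hand exponent of the next (indeed $3-2\beta=-1-2(\beta-2)$). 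Since the set of forbidden parameters is discrete and invariant under $\beta\mapsto\beta-2$, a single admissible $\beta$ serves for all $m$ steps, and the iteration yields
$$
\beta^{3m}\int w^{-1-2\beta}|u|^{2}\,dx\;\le\;C_{0}^{\,m}\int w^{4m-1-2\beta}\,\bigl|(-\Delta)^{m}u\bigr|^{2}\,dx;
$$
moreover the intermediate inequalities simultaneously bound $\Delta^{j}u$ for $0\le j<m$, which is what makes the commutator terms below manageable.

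Next I would insert the equation $(-\Delta)^{m}u=-Vu$ with $|V|\le C$: the right-hand side is then at most $C_{0}^{m}C^{2}\int w^{4m}\,w^{-1-2\beta}|u|^{2}\,dx$, which is absorbed by the left-hand side as soon as $w^{4m}\lesssim_{m,C}\beta^{3m}$ on the support of $u$, that is, on a region at scale $r$ once $\beta\gtrsim_{m,C}r^{4/3}$. This is exactly where the exponent $4/3$ (rather than $2m$) is produced: the weight loss $4m$ per cycle is matched by the parameter gain $\beta^{3m}$. Combining this with cutoff functions that localize the global solution $u$ to dyadic annuli — the commutator terms live on $\{\nabla\chi\ne0\}$ and are controlled by interior elliptic estimates for \eqref{1.1} together with the intermediate Carleman inequalities — gives a quantitative three-ball inequality, roughly $\|u\|_{L^{2}(B_{2r})}\lesssim e^{cr^{4/3}}\|u\|_{L^{2}(B_{r})}^{\theta}\|u\|_{L^{2}(B_{4r})}^{1-\theta}$ with $\theta\in(0,1)$, for the effective choice $\beta\approx r^{4/3}$. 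Iterating this from the unit ball, where $u(0)=1$ and interior regularity give $\|u\|_{L^{2}(B_{1})}\gtrsim1$, out to scale $R$, using $|u|\le C$ to bound the outermost term polynomially in $R$ and summing $\sum_{2^{j}\le R}(2^{j})^{4/3}\approx R^{4/3}$, yields $\|u\|_{L^{2}(B(x_{0},1))}\gtrsim\exp\{-CR^{4/3}\log R\}$ for $|x_{0}|\approx R$; the logarithmic factor is the same loss already present in the second-order iteration of \cite{BK}. A final application of interior elliptic estimates upgrades this to $\sup_{B(x_{0},1)}|u|$, which is the stated bound on $M(R)$.

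I expect the first step to be the main obstacle: one must check that the parameter shifts chain correctly through all $m$ applications while each intermediate parameter stays clear of the forbidden set, and — more delicately — that the weight used in \cite{BK} (a convexity-corrected modification of $|x|$, not the pure power $|x|^{-\beta}$) survives the iteration with constants uniform in $\beta$. A secondary point is that the localization involves derivatives of $u$ up to order $2m-1$ on the cutoff region; this is harmless because $u$ solves an elliptic equation there and the intermediate Carleman inequalities already control $\Delta^{j}u$ for $0\le j<m$, so no new a priori estimate is needed.
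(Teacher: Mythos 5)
Your core tools are the same as the paper's: its Lemma \ref{lem2.1} is obtained exactly as you propose, by iterating the Bourgain--Kenig estimate \cite{BK} $m$ times (though the actual right-hand weight gain is $\omega^{3}$ per Laplacian, i.e. exponent $2-2\tau$, so the parameter shift is $3/2$ rather than $2$, and there is no forbidden set in that lemma), and the cutoff terms are handled by an interior $L^{\infty}$ elliptic estimate (Lemma \ref{lem2.2}), as you anticipate. But two steps of your assembly do not work as stated. First, the Carleman inequality you posit is global on $\mathbb{R}^{n}$, with a weight $w\approx|x|$ and a $\beta^{3}$ gain; no such weight exists. Writing $w=e^{-\varphi}$ with $\varphi$ radial and $t=\log|x|$, the $\beta^{3}$ gain needs uniform strong pseudoconvexity, i.e. a uniformly positive second derivative of $\varphi$ in $t$, which is incompatible with $\varphi=-t+O(1)$ on all scales; the pure power $-\log|x|$ is exactly the degenerate case (compare Example \ref{exa3.2}(i): the relevant quantity vanishes identically for it), and the corrected BK weight is comparable to $|x|$ only on $B(0,10)$ and saturates at infinity. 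This is precisely why the paper rescales the solution, $u_{1}(x)=u(ARx+x_{0})$, and applies the fixed-scale Lemma \ref{lem2.1}: the equation then gives $|\Delta^{m}u_{1}|\leq C(AR)^{2m}|u_{1}|$, and absorbing the potential forces $\tau^{3m}\gtrsim(AR)^{4m}$, i.e. $\tau\sim R^{4/3}$. Your ledger ($w^{4m}$ against $\beta^{3m}$) lands on the same exponent, but it rests on an estimate stronger than the one that is actually available.

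Second, the final propagation step is a genuine gap. A three-ball inequality with a fixed H\"older exponent $\theta\in(0,1)$, iterated over the $\sim\log R$ dyadic scales between the unit ball and $|x_{0}|=R$, does not give $\exp\{-CR^{4/3}\log R\}$: at each step the unknown small-ball norm gets raised to the power $1/\theta$, so the factors $e^{-c(2^{j})^{4/3}}$ compound geometrically instead of summing, and the accumulated exponent is of size $R^{c(\theta)}$ with $c(\theta)$ unrelated to $4/3$; note also that if the dyadic balls are centered at the origin the ball $B(x_{0},1)$ never enters the scheme, while if they are centered at $x_{0}$ the unit-scale lower bound is exactly what is being sought. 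The paper avoids this entirely with a single ``lopsided'' application at scale $R$: after the rescaling above the weight is singular at $x_{0}$ (the new origin), the cutoff lives on $\frac{1}{4R}<|x|<4$, the point where $u=1$ sits at $|x|=1/A$ in the middle and bounds the left side from below, the global bound $|u|\leq C$ controls the outer cutoff term, and $\sup$ over $B(x_{0},O(1))$ enters through the inner cutoff term carrying the weight $\omega(1/(3R))^{-2\tau}\approx R^{2\tau}$; with $\tau\sim R^{4/3}$ this single factor is the source of the $R^{4/3}\log R$. To close your argument you would need to replace the dyadic three-ball iteration by this one-shot three-region inequality (or an equivalent), after which the rest of your outline matches the paper's proof.
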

Currently, we don't know yet whether the bound $\frac43$ here is also optimal (up to logarithmic loss) for $(-\Delta)^m$, $m>1$. Nevertheless, in dimension 2, we're able to show that for any $\epsilon>0$, there exists some fourth order elliptic operators, such that  the lower bound can be improved to $\frac87+\epsilon$. Furthermore, we shall prove this bound is essentially sharp (up to $\epsilon$-power loss) by constructing a Meshkov type example.
\begin{theorem}\label{thm1.2}
For any $\epsilon>0$, Let $P=P_1P_2$ in $\mathbb{R}^2$, where $P_1=\Delta_{\mathbb{R}^2}$, $P_2=\partial_{x_1}^2+(1+\frac{\epsilon}{2})\partial_{x_2}^2$. Assume $u$ satisfies \eqref{1.1} and \eqref{1.2}, then
$$
M(R)\gtrsim \exp\{-CR^{\frac87+\epsilon}\log{R}\}.
$$
Furthermore, there exists  nontrivial bounded functions $u, V$ satisfying \eqref{1.1} and
$$
u(x)\lesssim e^{-C|x|^{\frac87}}.
$$
\end{theorem}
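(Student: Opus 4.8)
\emph{The lower bound.} I would run the Bourgain--Kenig scheme already used for Theorem~\ref{thm1.1}, the new input being a sharper Carleman inequality made available by the fact that $P_1$ and $P_2$ have distinct principal symbols. After the rescaling $v(y)=u(Ry)$ one has $P_1P_2v+\widetilde Vv=0$ on $\{|y|\le10\}$ with $v(0)=1$, $|v|\le C$ and $|\widetilde V|\le\Lambda:=CR^{4}$ (the operator has order $4$); the target $M(R)\gtrsim\exp\{-CR^{8/7+\epsilon}\log R\}$ is then equivalent to a quantitative unique continuation bound for $P_1P_2+\widetilde V$ with vanishing rate $\lesssim\Lambda^{2/7}\log\Lambda$, modulo the $\epsilon$-loss. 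For $(-\Delta)^2$ one only has the doubly iterated $\Delta$-Carleman estimate of \cite{BK}, with gain $\tau^{3}$ in $\tau$, hence rate $\Lambda^{1/3}$ and the exponent $\tfrac43$ of Theorem~\ref{thm1.1}; so everything comes down to gaining one extra half power of $\tau$.

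\emph{The key Carleman estimate.} The heart of the matter is to prove, for a suitable radial Bourgain--Kenig weight $\phi$ (as in \cite{BK}, with $\nabla\phi\neq0$ on the shell), an inequality of the form
\begin{equation}\nonumber
\epsilon\,\tau^{7/2}\,\|e^{\tau\phi}v\|_{L^2}\ \le\ C\,\|e^{\tau\phi}\,P_1P_2\,v\|_{L^2}
\end{equation}
for all $v\in C_c^\infty$ supported in the shell and all $\tau\ge C\epsilon^{-1}$, with $C$ independent of $\epsilon$ and $\tau$. This is purely a statement about the conjugated symbols $p_j(\xi+i\tau\nabla\phi)$: at each $x$ in the shell their zero sets in $\xi$ are two pairs of points, one pair on the circle $|\xi|=\tau|\nabla\phi|$ perpendicular to $\nabla\phi$ and the other on the corresponding ellipse attached to $P_2$, and because the coefficient $1+\tfrac\epsilon2$ differs from $1$ these pairs are separated by a gap $\gtrsim\epsilon\tau$. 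Hence near the characteristic set of the conjugated $P_1$ the factor $P_2$ is elliptic of size $\gtrsim\epsilon\tau^{2}$, so there $P_1P_2$ inherits the classical second-order Carleman estimate for $P_1$ (gain $\tau^{3/2}$) amplified by $\epsilon\tau^{2}$; symmetrically near the characteristic set of $P_2$; and away from both loci the full product symbol is of size $\gtrsim\tau^{4}$. A microlocal decomposition of $v$ adapted to the two conjugated characteristic sets then assembles these three regimes into the gain $\epsilon\,\tau^{7/2}$. Feeding this into the propagation-of-smallness argument of \cite{BK} -- taking $\tau\sim(\Lambda/\epsilon)^{2/7}$ to absorb $\widetilde V$ and comparing $e^{\tau\phi}$ at an interior point where $|v|\sim1$ against its value near the outer boundary -- yields $M(R)\gtrsim\exp\{-C_\epsilon\Lambda^{2/7}\log\Lambda\}$, and the $\epsilon$-dependence of the constant ($C_\epsilon\sim\epsilon^{-2/7}$) is absorbed into the $\epsilon$-loss in the exponent $\tfrac87+\epsilon$. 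I expect this Carleman estimate -- in particular reconciling the microlocal decomposition with the spatial cutoff to the shell and keeping all constants uniform as $\epsilon\to0$ -- to be the principal technical obstacle.

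\emph{The Meshkov type example.} For the sharpness I would adapt Meshkov's construction \cite{M}. Note that $P=P_1P_2$ carries two independent ``holomorphic'' structures: $z=x_1+ix_2$, for which the powers $z^{-N}$ are $P_1$-harmonic (hence $P$-harmonic), and $\zeta=x_1+i(1+\tfrac\epsilon2)^{-1/2}x_2$, for which $\zeta^{-M}$ are $P_2$-harmonic. On the annulus $\{|x|\sim2^{k}\}$ I would take $u$ to be a suitable combination of high-frequency oscillatory profiles built from $z^{-N_k}$ and $\zeta^{-M_k}$ with $N_k,M_k\sim2^{8k/7}$ (so the local wavenumber is $\sim2^{k/7}$), and glue consecutive profiles across thin transition layers, matching $u$ together with its derivatives up to order $3$ -- as required for an $H^4_{\mathrm{loc}}$ solution of a fourth-order equation -- while keeping $|u|\lesssim1$ and $V:=-P_1P_2u/u$ bounded; $V$ is then essentially supported in the layers. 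The admissible growth rate of the frequencies is governed by the requirement $|P_1P_2u|\lesssim|u|$ across the layers, and because $P_1P_2$ is fourth order this forces a slower growth than the $2^{4k/3}$ available for $\Delta$; optimizing the balance produces $u(x)\lesssim e^{-c|x|^{8/7}}$. The parameter $\epsilon>0$ enters essentially -- the construction exploits the two distinct holomorphic structures, which degenerate into one when $\epsilon=0$ -- consistent with the example being available precisely for the operators for which the improved lower bound holds. As in Meshkov's original argument, the delicate part is the bookkeeping inside the transition layers, here made heavier by the fourth-order matching and by the interaction of the two holomorphic structures.
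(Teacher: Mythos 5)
Your lower-bound argument hinges on an unproved Carleman estimate, $\epsilon\,\tau^{7/2}\|e^{\tau\phi}v\|_{L^2}\le C\|e^{\tau\phi}P_1P_2v\|_{L^2}$, for a \emph{radial Bourgain--Kenig (log-type) weight}, and this is exactly where the proposal breaks down. Near the characteristic set of the conjugated $P_2$ you need the weight to be strongly pseudo-convex with respect to the anisotropic factor $P_2=\partial_{x_1}^2+b\partial_{x_2}^2$, $b=1+\tfrac\epsilon2$, and a radial log-type weight does not satisfy this at small scales: a direct computation (this is what the paper does in Lemma \ref{lemma3.3}) shows that for $\varphi=r^{-\alpha}$ the bracket $\{\Re p_{2,\tau},\Im p_{2,\tau}\}$ on $p_{2,\tau}=0$ has the sign of $(\alpha+2)\bigl(\tfrac{x_1^2+bx_2^2}{|x|^2}\bigr)^2-b-\tfrac{x_1^2+b^2x_2^2}{|x|^2}$, which along the $x_1$-axis forces $\alpha>b-1=\tfrac\epsilon2$; for the Bourgain--Kenig weight $\varphi_1$ of Example \ref{exa3.2} the same computation gives, along the $x_1$-axis, a factor $(|x|+1-b)$, which is \emph{negative} for $|x|<\epsilon/2$ --- precisely the small scales $|x|\sim 1/R$ that the propagation-of-smallness argument must reach. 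So the "symmetric" step of your microlocal decomposition (using the second-order Carleman gain for $P_2$ with the same radial weight) fails there; the convexification built into the BK weight degenerates at small scales while the anisotropy penalty, of relative size $\epsilon$, does not. The paper's fix is to take the genuinely convex power weight $\varphi=r^{-\epsilon}$, and it is this weight --- not an $\epsilon$-dependent constant --- that produces the exponent $\tfrac87+\epsilon$: with $\tau\sim R^{8/7}$ one pays $e^{\tau\varphi(1/(CR))}\sim e^{CR^{8/7+\epsilon}}$. Note also that your claimed estimate would yield $M(R)\gtrsim\exp\{-C_\epsilon R^{8/7}\log R\}$, strictly stronger than the theorem, which should itself have been a warning sign. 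Apart from the weight, your factorization idea (on $p_\tau=0$ one factor is elliptic where the other is characteristic, since $b\ne1$ separates the conjugated characteristic sets) is indeed the mechanism the paper uses, via the identity $\{\Re p_\tau,\Im p_\tau\}=\{\Re p_{1,\tau},\Im p_{1,\tau}\}|p_{2,\tau}|^2+\{\Re p_{2,\tau},\Im p_{2,\tau}\}|p_{1,\tau}|^2$ and the bound \eqref{equ3.4}.

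For the Meshkov-type example your plan diverges from what is actually needed, and the essential estimates are not carried out. The paper's construction (Proposition \ref{pro3.4}) uses only the harmonic building blocks $r^{-n}e^{-in\varphi}$ --- i.e., a single holomorphic structure --- glued exactly as in Meshkov but with annuli of width $\sim\rho^{3/7}$, degrees $n\sim\rho^{8/7}$ and jumps $k\sim\rho^{4/7}$; since $\Delta$ annihilates the blocks, the whole work is the verification that $|(\partial_{x_1}^2+2\partial_{x_2}^2)\Delta u|\le C|u|$ in the transition layers, where the second-order factor costs an extra $(n/\rho)^2\sim\rho^{2/7}$ against the $\Delta$-error $\sim nk/\rho^2\sim\rho^{-2/7}$. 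In particular your claim that the construction "exploits two distinct holomorphic structures" and that $\epsilon>0$ "enters essentially" is not correct: the second structure is never used, $b\ne1$ plays no role in this half of the theorem, and Remark \ref{rmk2.4} points out that the same construction indicates the $e^{-c|x|^{8/7}}$ example for $\Delta^2$ as well. Your frequency bookkeeping ($N_k\sim2^{8k/7}$, local wavenumber $\sim2^{k/7}$) is consistent with the correct balance, but as written the example is a programme (gluing two families of profiles, fourth-order matching) whose feasibility is asserted rather than shown, and whose extra complexity is unnecessary.
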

\begin{remark}\label{rmk1.3}
Although the operator $P$ above can be view as an "$\epsilon$ perturbation" of $\Delta^2$ in dimension 2, it seems that the order $\frac87$ can not be derived for $\Delta^2$ in this way since we shall see in section \ref{sec3} (see Example \ref{exa3.2}) that no weight function satisfies the strong pseudoconvex condition with respect to $\Delta^2$.
\end{remark}
The paper is organized as follows. In section \ref{sec2}, we prove Theorem \ref{thm1.1}, in addition to the Carleman estimates, we also need an interior regularity lemma to deal with the lower order terms. Section \ref{sec3} is devoted to prove Theorem\ref{thm1.2} by using the method similar to \cite{CGT}, which concerns the pseudo-convex weight functions (with respect to $P$). Throughout the paper, $C$ and $C_j$ denote absolute positive constants whose dependence will be specified whenever necessary. The value of C may vary from line to line.

\section{Proof of Theorem \ref{thm1.1}}\label{sec2}
We start with the following Carleman type inequality
\begin{lemma}\label{lem2.1}
There are constants $C_1, C_2, C_3$ and an increasing function $\omega=\omega(r)$ for $0<r<10$, such that
$$
\frac1C_{1}\leq \frac{\omega(r)}r{}\leq C_1,
$$
and for all $f\in C_{0}^{\infty}(B(0,10)\setminus\{0\})$, $\tau>C_2$, we have
\begin{equation}\label{equ2.1}
\tau^{3m}\int{\omega(|x|)^{-1-2\tau}|f|^2\, dx}\leq C_3\int{\omega(|x|)^{3m-1-2\tau}|\Delta^m f|^2\, dx}.
\end{equation}
\end{lemma}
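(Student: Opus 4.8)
The plan is to bootstrap the classical second-order Carleman estimate for $\Delta$ with a polar-type radial weight. The starting point is the known inequality (as used in \cite{BK}): there exist constants $C_1, C_2, C_3$ and an increasing $\omega=\omega(r)$ with $C_1^{-1}\le \omega(r)/r\le C_1$ such that, for all $g\in C_0^\infty(B(0,10)\setminus\{0\})$ and $\tau>C_2$,
\begin{equation}\label{eq:base}
\tau^{3}\int \omega(|x|)^{-1-2\tau}|g|^2\,dx \le C_3 \int \omega(|x|)^{2}\omega(|x|)^{-1-2\tau}|\Delta g|^2\,dx.
\end{equation}
The exponent shift by $2$ on the right corresponds to the two derivatives in $\Delta$, and the gain is $\tau^3$. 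The idea is to iterate: apply \eqref{eq:base} to $g=f$, then to $g=\Delta f$, then $g=\Delta^2 f$, and so on, $m$ times, noting at each stage that the weight exponent is shifted by $2$ (matching the $\omega^{2}$ factor produced by the previous step) and that the key algebraic identity $\omega^{2j}\cdot\omega^{-1-2\tau}=\omega^{2j-1-2\tau}$ lines up the weights. Since each application of \eqref{eq:base} is legitimate for $\tau$ large (uniformly, because $\omega\sim r$), after $m$ steps one collects a factor $\tau^{3m}$ on the left and the weight $\omega^{2m-1-2\tau}=\omega^{3m-1-2\tau+(m-1)}$... wait, more carefully, the right-hand weight becomes $\omega^{2m-1-2\tau}$, so one must also check the normalization so that it reads $\omega^{3m-1-2\tau}$; this is reconciled by distributing the weight powers symmetrically, i.e. choosing the exponent of $\omega$ in the intermediate terms to interpolate between $-1-2\tau$ and $3m-1-2\tau$ in steps matched to the derivative count — concretely, one proves by induction on $k=0,1,\dots,m$ that
\begin{equation}\label{eq:induct}
\tau^{3k}\int \omega(|x|)^{3k-1-2\tau}|\Delta^k f|^2\,dx \le C \int \omega(|x|)^{3(k+1)-1-2\tau}|\Delta^{k+1} f|^2\,dx,
\end{equation}
wait — that is not quite the right bookkeeping either; the clean statement to induct on is: for each $k$, $\tau^{3k}\int \omega^{-1-2\tau}\omega^{?}|\Delta^k f|^2 \le C\int \omega^{?}|\Delta^{k}(\cdot)|$. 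Let me restate: the induction hypothesis is
$$
\tau^{3k}\int \omega(|x|)^{a_k-2\tau}|f|^2\,dx \le C_3^k \int \omega(|x|)^{a_k+2k-2\tau}|\Delta^k f|^2\,dx
$$
with $a_k$ chosen so that $a_k+2k = a_{k+1}$ and $a_m + $ (nothing) matches; solving, $a_k = -1-2k+\text{const}$, and one picks the constant so the final exponent on the right is $3m-1$, i.e. one uses at step $k$ the weight power $-1-2\tau$ shifted appropriately — the bookkeeping is routine once the induction is set up, the only real content being that \eqref{eq:base} with weight $\omega^{s-2\tau}$ (for any fixed real $s$ in a bounded range) still holds with constants uniform in the relevant range of $s$, which follows because replacing $\tau$ by $\tau - s/2$ in \eqref{eq:base} is a harmless shift for $\tau$ large. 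So at each stage I apply \eqref{eq:base} to the function $\Delta^k f$ (which is again in $C_0^\infty(B(0,10)\setminus\{0\})$) with the weight exponent appropriate to that stage, and multiply the resulting inequalities.

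The main obstacle, and the step requiring care, is verifying that \eqref{eq:base} really does hold with the \emph{same} weight $\omega$ for a whole family of exponents $s - 2\tau$ (not just $s=-1$), uniformly, so that the iteration closes; equivalently, one must either quote the Carleman estimate of \cite{BK} in a form with a free shift parameter, or reprove it in that form. Once that is in hand, the iteration is purely bookkeeping: chain the $m$ inequalities, absorb the product of the constants $C_3^m$ into a single $C_3$ (allowed since $m$ is fixed), collect $\tau^{3m}$, and read off \eqref{equ2.1}. I would also need to remark that all intermediate integrals are finite because $f$ is smooth and compactly supported away from the origin, so $\Delta^k f$ is likewise smooth with compact support in $B(0,10)\setminus\{0\}$, and the weights $\omega^{s-2\tau}\sim |x|^{s-2\tau}$ are locally integrable there — hence no issue applying \eqref{eq:base} at each step and no boundary terms appear.
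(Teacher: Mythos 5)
Your overall strategy --- iterate the $m=1$ Carleman estimate of Bourgain--Kenig, applying it successively to $\Delta^k f$ with a shifted value of $\tau$ --- is exactly the proof in the paper. However, as written your argument has a concrete gap: you misquote the base estimate. Lemma 3.15 of \cite{BK} gains $\tau^3$ together with a weight shift of \emph{three} powers of $\omega$, namely
$$
\tau^{3}\int \omega(|x|)^{-1-2\tau}|g|^2\,dx \le C\int \omega(|x|)^{2-2\tau}|\Delta g|^2\,dx ,
$$
whereas you wrote the right-hand weight as $\omega^{2}\cdot\omega^{-1-2\tau}=\omega^{1-2\tau}$ (a shift of two, motivated by "two derivatives"). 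With your shift-two version, $m$ iterations can only produce the exponent $2m-1-2\tau$ on the right, and this cannot be upgraded to the claimed $3m-1-2\tau$ afterwards: passing from $\omega^{2m-1-2\tau}$ to $\omega^{3m-1-2\tau}$ (or absorbing the discrepancy by relabelling $\tau$) would require a bound of the form $1\le C\,\omega(|x|)^{m}$, which fails near the origin since $\omega(r)\sim r$. So the "distribute the weight powers symmetrically / interpolate the exponents" step that you leave as routine bookkeeping is in fact the point at which the argument, in the form you set it up, does not close --- and you explicitly leave it unresolved.

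Once the base estimate is quoted correctly, your mechanism works verbatim and no "free shift parameter" version of the Carleman inequality is needed: since $\omega^{2-2\tau}=\omega^{-1-2(\tau-\frac32)}$, one simply applies the $m=1$ lemma to $\Delta^k f$ with $\tau_k=\tau-\frac32 k$ (still $>C_2$ once $\tau>C_2+\frac32 m$), which is legitimate because $\Delta^k f\in C_0^\infty(B(0,10)\setminus\{0\})$, exactly as you observe. Chaining the $m$ inequalities gives
$$
\prod_{j=0}^{m-1}\Bigl(\tau-\tfrac32 j\Bigr)^{3}\int \omega^{-1-2\tau}|f|^2\,dx \le C^m\int \omega^{3m-1-2\tau}|\Delta^m f|^2\,dx ,
$$
and the final point --- which your write-up glosses over by "collecting $\tau^{3m}$" --- is the elementary remark that $\tau^{3m}\lesssim \prod_{j=0}^{m-1}(\tau-\frac32 j)^3$ for $\tau$ large (with constant depending only on $m$). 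This is precisely the paper's one-line proof; your proposal needs the corrected exponent and this last observation to be complete.
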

\begin{proof}
In the case $m=1$, the result is due to  Lemma 3.15 in \cite{BK}, while the general result can be deduced by applying this $m$ times and noting that  $\tau^{3m}\lesssim \prod_{j=0}^{m-1}(\tau-\frac32 j)^3$.
\end{proof}
In order to prove Theorem \ref{thm1.1}, we shall also need the following interior regularity property of elliptic operators, which can be thought of as the $L^{\infty}$ version of Theorem 17.1.3 in \cite{H}.
\begin{lemma}\label{lem2.2}
Assume $P(D)$ is homogeneous and elliptic of order $2m$. Let $X$ be an open set containing $0$, and denote $d(x)$ the distance from $x\in X$ to $\complement{X}$, the complement of $X$. If $P(D)u\in L^{\infty}$ and $u\in L^{\infty}$, then it follows that for $|\alpha|<2m$,
\begin{equation}\label{equ2.2}
\|d(x)^{|\alpha|}D^{\alpha}u\|_{L^{\infty}(X)}\leq C(\|d(x)^{|2m|}P(D)u\|_{L^{\infty}(X)}+\|u\|_{L^{\infty}(X)}).
\end{equation}
\end{lemma}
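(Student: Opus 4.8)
The plan is to reduce the $L^\infty$ estimate to the $L^2$-based elliptic regularity statement of H\"ormander (Theorem 17.1.3 in \cite{H}) via a scaling and covering argument together with Sobolev embedding. First I would fix a point $x_0\in X$ and set $\rho=d(x_0)$. On the ball $B(x_0,\rho/2)$ one has $d(x)\simeq\rho$ for all $x$ in the ball, so it suffices to show
$$
\rho^{|\alpha|}|D^\alpha u(x_0)|\le C\bigl(\rho^{2m}\|P(D)u\|_{L^\infty(X)}+\|u\|_{L^\infty(X)}\bigr),\qquad |\alpha|<2m,
$$
with $C$ independent of $x_0$. Rescale by setting $v(y)=u(x_0+\rho y)$ for $y\in B(0,1/2)$; then $P(D)v(y)=\rho^{2m}(P(D)u)(x_0+\rho y)$ since $P$ is homogeneous of degree $2m$, and the claim becomes the scale-invariant statement: if $P(D)$ is homogeneous elliptic of order $2m$, $v,P(D)v\in L^\infty(B(0,1/2))$, then for $|\alpha|<2m$
$$
|D^\alpha v(0)|\le C\bigl(\|P(D)v\|_{L^\infty(B(0,1/2))}+\|v\|_{L^\infty(B(0,1/2))}\bigr).
$$

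To prove the rescaled statement I would apply interior elliptic regularity in $L^2$ on a nested sequence of balls. Since $v\in L^\infty(B(0,1/2))\subset L^2(B(0,1/2))$ and $P(D)v\in L^\infty\subset L^2$, H\"ormander's theorem (or standard elliptic regularity for constant-coefficient operators) gives $v\in H^{2m}_{\mathrm{loc}}$ with, say,
$$
\|v\|_{H^{2m}(B(0,1/4))}\le C\bigl(\|P(D)v\|_{L^2(B(0,1/2))}+\|v\|_{L^2(B(0,1/2))}\bigr).
$$
To control $D^\alpha v(0)$ pointwise for $|\alpha|<2m$ I would bootstrap one more derivative: differentiating the equation, $P(D)(D^\alpha v)=D^\alpha(P(D)v)$, and since the right side need not be in $L^\infty$, it is cleaner instead to iterate the $L^2$ estimate on slightly shrinking balls $B(0,1/2)\supset B(0,r_1)\supset\cdots$ to gain regularity up to $H^{s}$ for some $s>2m+n/2$; then Sobolev embedding $H^{s}\hookrightarrow C^{2m-1}$ on the final ball yields
$$
|D^\alpha v(0)|\le C\|v\|_{H^{s}(B(0,1/8))}\le C\bigl(\|P(D)v\|_{L^\infty(B(0,1/2))}+\|v\|_{L^\infty(B(0,1/2))}\bigr),
$$
where in the last step I use $L^\infty\hookrightarrow L^2$ on a bounded ball and the fact that each application of elliptic regularity only costs finitely many derivatives, so finitely many iterations suffice. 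Undoing the scaling and using $d(x)\simeq\rho$ on $B(x_0,\rho/2)$ then gives the pointwise bound at $x_0$; taking the supremum over $x_0\in X$ gives \eqref{equ2.2}.

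The main obstacle, and the only point requiring care, is bookkeeping the dependence of constants under scaling: one must check that the elliptic-regularity constant in the rescaled statement genuinely depends only on $P$ (through its ellipticity constant and degree $2m$) and on $n$, not on $x_0$ or $\rho$ or $X$ — this is exactly where homogeneity of $P$ is essential, as it makes the rescaled operator identical to the original rather than merely comparable. A secondary technical point is that near $\partial X$ the distance weight $d(x)^{|\alpha|}$ degenerates, but since the estimate at $x_0$ only ever uses data on $B(x_0,\rho/2)\subset X$ and $d(x)\le d(x_0)+|x-x_0|\le \tfrac32\rho$ there, the weights on both sides are comparable to the corresponding powers of $\rho$ and no boundary analysis is needed. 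One should also note the harmless abuse of notation $|2m|$ for $2m$ in the statement of \eqref{equ2.2}.
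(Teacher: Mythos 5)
Your reduction to the unit-scale statement on $B(0,1/2)$ is fine in outline (with the caveat that in your first display the right-hand side should carry $\|P(D)u\|_{L^\infty(B(x_0,\rho/2))}$ rather than $\|P(D)u\|_{L^\infty(X)}$: only the local norm is comparable to the weighted norm $\|d(x)^{2m}P(D)u\|_{L^\infty(X)}$ appearing in \eqref{equ2.2}, which is what you in fact use at the end). The genuine gap is the bootstrap. You cannot ``iterate the $L^2$ estimate on shrinking balls to reach $H^{s}$ with $s>2m+n/2$'': elliptic regularity gives $v\in H^{t+2m}_{\mathrm{loc}}$ only when $P(D)v\in H^{t}_{\mathrm{loc}}$, and here $P(D)v$ is a \emph{fixed} function that is merely bounded (in the intended application $P(D)u=-Vu$ with $V$ only bounded measurable), so its regularity does not improve from one iteration to the next; $L^\infty$ is not contained in any $H^{t}_{\mathrm{loc}}$ with $t>0$. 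Hence the most the $L^2$ theory yields is $v\in H^{2m}_{\mathrm{loc}}$, and $H^{2m}(B)\hookrightarrow C^{2m-1}$ fails for $n\ge 2$: Sobolev embedding controls $D^\alpha v$ pointwise only for $|\alpha|<2m-n/2$, so exactly the top-order derivatives with $|\alpha|$ up to $2m-1$ — the ones needed in the proof of Theorem \ref{thm1.1} — escape your argument.

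The gap can be repaired in two ways. Within your scheme, replace the $L^2$ theory by $L^p$ theory: since $P(D)v\in L^\infty(B(0,1/2))\subset L^p$ for every $p<\infty$, Calder\'on--Zygmund estimates for the constant-coefficient elliptic operator give $v\in W^{2m,p}_{\mathrm{loc}}$, and choosing $p>n$ the embedding $W^{2m,p}\hookrightarrow C^{2m-1}$ yields the required pointwise bounds, with constants depending only on $n$, $p$ and the ellipticity of $P$, so your scaling and covering skeleton then goes through. The paper instead avoids $L^2$/$L^p$ regularity altogether: it shows that $A^{|\alpha|}\xi^{\alpha}/(1+P(A\xi))$, $|\alpha|<2m$, is an $L^1$ (hence $L^\infty$) Fourier multiplier uniformly in $A>0$, via Bernstein's theorem and the symbol bounds $|D^{\beta}(\xi^{\alpha}/(1+P(\xi)))|\le C_\beta(1+|\xi|)^{-1-|\beta|}$, which gives the global scale-invariant estimate $\sum_{|\alpha|<2m}A^{2m-|\alpha|}\|D^{\alpha}u\|_{L^\infty}\le C(\|P(D)u\|_{L^\infty}+A^{2m}\|u\|_{L^\infty})$; this is then localized exactly as in Theorem 17.1.3 of \cite{H}, applying it to $u\,\chi((x-y)/R)$ with $A=M/R$ and $R(y)=\min\{R_0,d(y)/2\}$, and absorbing the intermediate derivative terms after taking the supremum in $y$. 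Either route is legitimate; as written, however, your passage through $H^{s}$ with $s>2m+n/2$ is not available.
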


\begin{proof}
The proof is essentially similar to Theorem 17.1.3 in \cite{H}, we sketch the proof here for the sake of completeness. First, we claim that for any $A>0$, $|\alpha|<2m$,
$\frac{A^{|\alpha|}\xi^{\alpha}}{1+P(A\xi)}$ is a $L^1$ multiplier with bound independent of $A$, hence a $L^{\infty}$ multiplier by duality. In fact, by scaling, it suffices to assume $A=1$, furthermore, we note that for $|\alpha|<2m$,
$$
|D^{\beta}(\frac{\xi^{\alpha}}{1+P(\xi)})|\leq C_\beta (1+|\xi|)^{-1-|\beta|},
$$
thus the claim follows from Bernstein's theorem (see e.g.\cite{G}). So we have the following
$$
\sum_{|\alpha|<2m}{A^{2m-|\alpha|}\|D^{\alpha}u\|_{L^{\infty}}}\leq C(\|P(D)u\|_{L^{\infty}}+A^{2m}\|u\|_{L^{\infty}}), \quad  A>0 .
$$
Then we can proceed as Theorem 17.1.3 in \cite{H} with minor changes. Applying the above estimates to $v=u\cdot\chi(\frac{x-y}{R})$, where
$y\in X$, and $d(y)\ge 2R$ and $\chi\in C_0^{\infty}(B(0,1))$ which is equal to 1 in $B(0,\frac12)$, and expanding $P(D)v$ by Leibniz'
formula with $A=M/R$, gives
\begin{align*}
\sum_{|\alpha<2m|}{M^{2m-|\alpha|}}R^{\alpha}\sup_{B(y, \frac{R}{2})}|D^{\alpha}u|&\leq C(R^{2m}\sup_{B(y, R)}|P(D)u|+\sum_{|\alpha<2m|}R^{\alpha}\sup_{B(y, R)}|D^{\alpha}u|\\
&+M^{2m}\sup_{B(y, R)}|u|),
\end{align*}
where $M$ is some large constant.With $R_0$ to be chosen later, we define
$$
R(y)=\min\{R_0, \frac{d(y)}{2}\}.
$$
Since $|R(x)-R(y)|\leq\frac{|x-y|}{2}$, then with a new constant independent of $R_0$, we have
\begin{align}\label{equ2.22}
\sum_{|\alpha<2m|}{M^{2m-|\alpha|}}&\sup_{B(y, \frac{R(y)}{2})}R(x)^{\alpha}|D^{\alpha}u|\leq C(\sup_{B(y, R(y))}R(x)^{2m}|P(D)u|\nonumber \\
&+\sum_{|\alpha<2m|}\sup_{B(y, R(y))}R(x)^{\alpha}|D^{\alpha}u|+M^{2m}\sup_{B(y, R)}|u|),
\end{align}

Now we take sup norm with respect to $y\in X$, and absorb the second term in the right hand side of \eqref{equ2.22} to the left hand side, which gives \eqref{equ2.2}.
\end{proof}

\begin{proof}[Proof of Theorem \ref{thm1.1}]
Define $u_1(x)=u(ARx+x_0)$ with some small but fixed constant $A$ to be specified later. Since $u$ satisfies \eqref{1.1}, we have
\begin{equation}\label{equ2.3}
|u_1|\leq C, \quad |\Delta^{m}u_1|\leq C(AR)^{2m}|u_1|.
\end{equation}
Assume as we may  $\max_{|x|=\frac1A}|u_1(x)|=1$, First, we have for $|\alpha|<2m$,
\begin{equation}\label{equ2.4}
|D^{\alpha}u_1|\leq C(AR)^{|\alpha|},
\end{equation}
Now choose a bump function $\zeta\in C_0^{\infty}(\frac{1}{4R}<|x|<4)$, with $\zeta=1$, if $\frac{1}{3R}\leq|x|\leq 3$ such that the following estimates
\begin{equation}\label{equ2.5}
\begin{cases}
|\zeta^{\alpha}|\leq CR^{|\alpha|}, \quad \text{if}\quad |x|\leq \frac{1}{3R},\\[4pt]
|\zeta^{\alpha}|\leq C_{\alpha}, \quad \text{if} \quad |x|\ge 3.
\end{cases}
\end{equation}
hold. Applying \eqref{equ2.1} to $f=u_1\zeta$ gives
\begin{equation}\label{equ2.6}
\begin{aligned}
\tau^{3m}\int{\omega^{-1-2\tau}\zeta^2|u_1|^2}&\leq C\int{\omega^{3m-1-2\tau}\zeta^2|\Delta^m u_1|^2}\\
&+\{\int_{\frac{1}{4R}\leq |x|\leq\frac{1}{3R}} +\int_{3\leq |x|\leq 4}\}\omega^{3m-1-2\tau}\sum_{|\alpha|<2m}{|D^{2m-\alpha}\zeta|^2|D^{\alpha}u_1|^2}\\
&\triangleq I_1+I_2.
\end{aligned}
\end{equation}
By \eqref{equ2.3}, we have
$$
I_1\leq C(AR)^{4m}\int{\omega^{-1-2\tau}\xi^2|u_1|^2}.
$$
Choose
$$
\tau\sim R^{\frac43},
$$
we can absorb the term $I_1$ into the left hand side of \eqref{equ2.6}. To deal with the term $I_2$, we note that by \eqref{equ2.4} and \eqref{equ2.5}, one has
$$
\int_{\frac{1}{4R}\leq |x|\leq\frac{1}{3R}}{\omega^{3m-1-2\tau}\sum_{|\alpha|<2m}{|D^{2m-\alpha}\zeta|^2|D^{\alpha}u_1|^2}}\leq R^{2\tau+m+1-2|\alpha|}\max_{|x|\leq \frac{1}{3R}}\sum_{|\alpha|<2m}{|D^{\alpha}u_1|^2},
$$
and
$$
\int_{3\leq |x|\leq 4}{\omega^{3m-1-2\tau}\sum_{|\alpha|<2m}{|D^{2m-\alpha}\zeta|^2|D^{\alpha}u_1|^2}}\leq C(AR)^{4m-2}\omega(3)^{3m-1-2\tau}.
$$
Therefore
\begin{equation}\label{equ2.7}
\begin{aligned}
\frac{\tau^{3m}}{2}\int{\omega^{-1-2\alpha}\zeta^2|u_1|^2}&\leq C\{R^{2\tau+m+1-2|\alpha|}\max_{|x|\leq \frac{1}{3R}}\sum_{|\alpha|<2m}{|D^{\alpha}u_1|^2}\\
&+(AR)^{4m-2}\omega(3)^{3m-1-2\tau}\}
\end{aligned}
\end{equation}
Let now $u_1(a)=1$ for some $a\in \mathbb{R}^n$, $|a|=\frac1A$, thanks to \eqref{equ2.4}, one has
\begin{align}\nonumber
|u_1(x)|\ge \frac12, \quad \text{if}\quad |x-a|\leq \frac{1}{CAR}.
\end{align}
As the same in \cite{BK}, we can choose $A$ such that the last term in \eqref{equ2.7} can also be absorbed to the left hand side of \eqref{equ2.7}. Now apply Lemma  \ref{lem2.2} with $X=B(0,\frac{1}{R})$ and use \eqref{equ2.3} , we obtain
\begin{align}\nonumber
R^{-CR^{\frac43}} &\leq \max_{|x|\leq \frac{1}{3R}}\sum_{|\alpha|<2m}{|D^{\alpha}u_1|^2}\\
&\leq C\sum_{|\alpha|<2m}(R^{|\alpha|-2m}\max_{|x|\leq \frac{1}{R}}|\Delta^m u_1|+R^{|\alpha|}\max_{|x|\leq \frac{1}{R}}|u_1|)\nonumber\\
&\leq CR^{2m-1}\max_{|x|\leq \frac{1}{R}}|u_1|,\nonumber
\end{align}
which proves the theorem.
\end{proof}
\begin{remark}\label{rmk2.3}
In \cite{M}, Meshkov showed that if $u\in H_{2}^{loc}(\Omega_{\rho})$, where $\Omega_{\rho}=\mathbb{R}^n\setminus B(0, \rho)$, and satisfies
$\Delta u-Vu=0$, for some bounded potential $V$, and  if for any $\tau>0$,
\begin{equation}\label{equ2.9}
\int_{\Omega_{\rho}}|u|^2\exp\{2\tau|x|^{\frac43}\}<\infty,
\end{equation}
then $u\equiv 0$. We note that this result can also be generalized to the case $(-\Delta)^m$ by assuming $u\in H_{2m}^{loc}(\Omega_{\rho})$ and the above growth condition \eqref{equ2.9}, since on the one hand, the following Carleman estimates
$$
\tau^{3m}\int{|v|^2r\exp\{2\tau r^{\frac43}\}\, drd\omega}\leq C\int{|\Delta^m v|^2r\exp\{2\tau r^{\frac43}\}\, drd\omega},
$$
can be easily deduced from Lemma 1 in \cite{M}
\footnote[1]{we thank Jiuyi Zhu for  pointing out this to us.} , and on the other hand, the condition \eqref{equ2.9} allows us to obtain a weighted interior $L^2$ regularity estimates in each annulus, since the weight $e^{2\tau|x|^{\frac43}}$ is bounded both from below and above in such annulus, and we can sum over the annulus to get a global one (with a different $\tau$).
\end{remark}

\begin{remark}\label{rmk2.4}
It seems that the example constructed in \cite{M} is not enough to show the sharpness for the power of Laplacian, though the constructions indicate that in dimension 2, there exists a nontrivial solution $u$ of the equation $\Delta^2 u + Vu=0$ with some bounded $V$, such that $|u(x)|\leq C \exp\{-c|x|^{\frac87}\}$, see Section \ref{sec3} below for the case of "perturbations" of $\Delta^2$.

\end{remark}
\section{Proof of Theorem \ref{thm1.2}}\label{sec3}

First, we recall the following notion of pseudo-convex weight fucntions.
\begin{definition}\label{def3.1}
Let $P$ be principally normal in $X\subset\mathbb{R}^n$, with principal symbol $p$.  A $C^2$ function $\varphi$ is called strongly pseudo-convex with respect to $P$ at $x_0$ if
\begin{align*}
\Re\{\bar{p}, \{p, \varphi\}\}(x_0, \xi)>0,\quad \text{whenever} \, p(x_0,\xi)=0, \, \xi\in \mathbb{R}^n\setminus \{0\},
\end{align*}
and
\begin{align*}
\{\bar{p}(x,\xi-i\tau\nabla\varphi), p(x,\xi+i\tau\nabla\varphi)\}/2i\tau >0\quad \text{on} \, \{ p(x,\xi+i\tau\nabla\varphi)=0,\, \tau>0,\\
(\xi, \tau)\neq 0\},
\end{align*}
where $\{p, q \}=\sum{(\frac{\partial p}{\partial \xi_j}\frac{\partial q}{\partial x_j}-\frac{\partial q}{\partial \xi_j}\frac{\partial p}{\partial x_j})}$ is the Poisson bracket of $p$ and $q$.
\end{definition}

In particular, if $P$ is elliptic, then $\varphi$ is strongly pseudo-convex with respect to $P$ if
\begin{equation}\label{equ3.1}
\{\Re{p(x,\xi+i\tau\nabla\varphi)}, \Im{p(x,\xi+i\tau\nabla\varphi)}\}>0 \quad \text{on}~~ p(x,\xi+i\tau\nabla\varphi)=0.
\end{equation}

\begin{example}\label{exa3.2}
 (i) Consider  $P=-\Delta$, it's easy to see that $\varphi$ is strongly pseudo-convex with respect to $-\Delta$ if and only if
 $$
 (\xi, H(\varphi)\xi)+\tau^2(\nabla\varphi, H(\varphi)\nabla\varphi)>0
 $$
 on the set defined by
 \begin{align}\label{equ3.2}
 \begin{cases}
 |\xi|^2=\tau^2|\nabla\varphi|^2\\[4pt]
  \xi\cdot\nabla\varphi=0
  \end{cases}
 \end{align}
  where $(\cdot,\cdot)$ is the standard inner product in Euclidian space, and $H(\varphi)$ is the Hessian of $\varphi$. Let $\varphi_1=-\ln{|x|}-\int_{0}^{|x|}{\frac{e^{-t}-1}{t} dt}$ and assume  $0\notin X$, which is the weight (singular at the origin) used in Section \ref{sec2} (see \cite{BK}) , in this case
 $$
 H(\varphi_1)=-\frac{e^{-|x|}}{|x|^2}Id+ (\frac{e^{-|x|}}{|x|^3}+\frac{2e^{-|x|}}{|x|^4})x\cdot x^t,
 $$
 where Id is the identity matrix, thus on the set defined by \eqref{equ3.2}, one has
 \begin{align*}
 (\xi, H(\varphi_1)\xi)+\tau^2(\nabla\varphi_1, H(\varphi_1)\nabla\varphi_1)=\tau^2\frac{e^{-3|x|}}{|x|^3}>0,
 \end{align*}
 which implies that $\varphi_1$ is strongly pseudo-convex with respect to $-\Delta$ in $X$. We note also that other
 strongly (singular) pseudo-convex weight functions include $\varphi_2(x)=(\ln{|x|})^2$, $\varphi_3(x)=-\ln(|x|+\lambda|x|^2)$, where $\lambda>1$. These weight functions are very useful in obtaining strong unique continuation theorems for second order elliptic operators with principal part $\Delta$, see e.g. \cite{H83}, \cite{R}, \cite{L}.

(ii) $P=(-\Delta)^m, m>1$. This is quite different from the case $m=1$. In this case, no functions satisfy the convex condition \eqref{equ3.1}. In fact, denote $p_m=|\xi|^{2m}$, if such a function exists, then
\begin{align*}
&\{\bar{p}_m(\xi+i\tau\nabla\varphi), \bar{p}_m(\xi+i\tau\nabla\varphi)\}/2i\tau\\
 &=\{(|\xi|^2-\tau^2|\nabla\varphi|^2-2i\tau\xi\cdot\nabla\varphi)^m, (|\xi|^2-\tau^2|\nabla\varphi|^2+2i\tau\xi\cdot\nabla\varphi)^m\}\\
&=m^2[(|\xi|^2-\tau^2|\nabla\varphi|^2)^2+4\tau^2(\xi\cdot\nabla\varphi)^2]^{m-1}\{\bar{p}_1(\xi-i\tau\nabla\varphi), p_1(\xi+i\tau\nabla\varphi)\}/2i\tau\\
&\equiv 0
\end{align*}
 on $\{p_m(\xi+i\tau\nabla\varphi)=0\}$, i.e., the set defined by \eqref{equ3.2}.

\end{example}
Now we are in a position to prove Theorem \ref{thm1.2}, the key point is the following Carleman estiamtes.
\begin{lemma}\label{lemma3.3}
Let $\varphi(r)=r^{-\alpha}$, $r=|x|$, $P=P_1P_2$, where $P_1=\Delta_{\mathbb{R}^2}$, $P_2=\partial_{x_1}^2+b\partial_{x_2}^2$, $b>0, b\neq 1$. Further
suppose
$$
\alpha>\max\{\frac{1}{b}-1, b-1\}.
$$
Then we have
\begin{equation}\label{equ3.3}
\tau^{-1}\|(r|\nabla\varphi|)^{-\frac12}e^{\tau\varphi(r)}u\|_{4, \tau}^2\leq C\|e^{\tau\varphi(r)}Pu\|_{L^2}^2, \quad u\in C_0^{\infty}(B(0, 10)\setminus \{0\}),
\end{equation}
where $\|v\|_{4, \tau}^2=\|\partial^4v\|_{L^2}+\||\tau\nabla\varphi|^4v\|_{L^2}^2$, and C is some positive constant does not depend on $\tau$.
\end{lemma}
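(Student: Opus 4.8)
The plan is to establish \eqref{equ3.3} by the standard Carleman machinery for products of second order operators, factoring $P = P_1 P_2$ and conjugating with the weight $e^{\tau\varphi}$. First I would reduce to an estimate on the conjugated operator $P_\varphi = e^{\tau\varphi} P e^{-\tau\varphi}$, whose symbol is $p(x,\xi + i\tau\nabla\varphi) = p_1(x,\xi+i\tau\nabla\varphi)\,p_2(x,\xi+i\tau\nabla\varphi)$. Since $P$ is elliptic and, for the chosen weight $\varphi(r) = r^{-\alpha}$ with $\alpha > \max\{1/b - 1,\, b-1\}$, both factors $\varphi$ (viewed against $P_1$ and against $P_2$) are strongly pseudo-convex in the sense of Definition \ref{def3.1} — this is exactly the role of the lower bound on $\alpha$, which one checks by computing the Poisson bracket \eqref{equ3.1} for each $p_i$ as in Example \ref{exa3.2}(i) but with the non-isotropic symbol $p_2 = \xi_1^2 + b\xi_2^2$ and the radial weight. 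The key is that the pseudo-convexity defect (the bracket $\{\mathrm{Re}\,p_i,\mathrm{Im}\,p_i\}$ on the characteristic set) stays strictly positive, with the quantitative gain measured by $r|\nabla\varphi| \sim \tau^{-1}\cdot(\text{power of }r)$, which is why the weight $(r|\nabla\varphi|)^{-1/2}$ appears on the left.

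Next I would prove the single-operator Carleman estimates for $P_1$ and for $P_2$ separately — of the form $\tau^{-1/2}\|(r|\nabla\varphi|)^{-1/2} e^{\tau\varphi} v\|_{2,\tau} \lesssim \|e^{\tau\varphi} P_i v\|_{L^2}$, where $\|\cdot\|_{2,\tau}$ is the natural second order analogue of $\|\cdot\|_{4,\tau}$ — following the pseudo-convex-weight method of \cite{CGT} (and \cite{H83}). For $P_1 = \Delta$ this is essentially Lemma 3.15 of \cite{BK} / the $m=1$ case already used in Section \ref{sec2}; for $P_2$ one repeats the argument, the point being that $P_2$ is a constant-coefficient elliptic operator and the pseudo-convexity computation goes through verbatim once $\alpha > b - 1$ and $\alpha > 1/b - 1$ guarantee the sign. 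Then I would compose the two: write $Pu = P_1(P_2 u)$, apply the $P_1$-estimate to $w = P_2 u$ to control $\|(r|\nabla\varphi|)^{-1/2} e^{\tau\varphi} P_2 u\|_{2,\tau}$ by $\|e^{\tau\varphi} P u\|_{L^2}$, and then apply the $P_2$-estimate to $u$ itself, with the weaker weight $(r|\nabla\varphi|)^{-1/2} e^{\tau\varphi}$ now playing the role of the Carleman weight, to bootstrap up to the full fourth order norm $\|(r|\nabla\varphi|)^{-1/2} e^{\tau\varphi} u\|_{4,\tau}$. Each composition costs a factor $\tau^{-1/2}$, so iterating twice produces the $\tau^{-1}$ on the left of \eqref{equ3.3}; one has to check that the weight powers of $r|\nabla\varphi|$ match up across the two steps, which they do because the same radial weight $\varphi$ is used throughout.

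The main obstacle I anticipate is the bookkeeping in the composition step: when one applies the $P_2$-Carleman estimate with $e^{\tau\varphi}(r|\nabla\varphi|)^{-1/2}$ as weight rather than a clean exponential, the conjugated operator acquires extra lower order terms coming from derivatives of the polynomial factor $(r|\nabla\varphi|)^{-1/2}$, and one must verify these are absorbed by the leading term — this is where the strict positivity (not just nonnegativity) in the pseudo-convex condition is essential, and where the precise exponent $1/2$ and the gain in $\tau$ are pinned down. A secondary technical point is handling the singularity at the origin: since $\varphi(r) = r^{-\alpha} \to \infty$ as $r \to 0$, the functions are compactly supported away from $0$ (hence $C_0^\infty(B(0,10)\setminus\{0\})$), and one needs the estimates to be uniform as the support shrinks toward the origin, which follows from the homogeneity of the whole setup under scaling $x \mapsto \lambda x$, $\tau \mapsto \lambda^{\text{const}}\tau$ — but this must be tracked to be sure no $r$-dependent constant sneaks in. Beyond these, the argument is a by-now-standard application of the Hörmander–Carleman scheme for pseudo-convex weights, so I do not expect conceptual difficulties, only careful verification that the non-isotropic factor $P_2$ does not spoil the radial pseudo-convexity inequality.
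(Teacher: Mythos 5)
There is a genuine gap, and it is precisely at the step you flag as ``bookkeeping'': composing the two second order Carleman estimates cannot produce an estimate as strong as \eqref{equ3.3}, so the whole strategy falls short of the statement. Count powers of $\tau$ at unit scale $r\sim 1$ (so that all factors of $r$ and $|\nabla\varphi|$ are constants). The sharp second order estimate for a strongly pseudo-convex weight is $\tau^{3}\|e^{\tau\varphi}v\|_{L^2}^2\lesssim\|e^{\tau\varphi}P_i v\|_{L^2}^2$, i.e.\ a loss of half a derivative ($\tau^{-1}$ in front of the squared $\|\cdot\|_{2,\tau}$ norm); your unsquared ``$\tau^{-1/2}$'' is the same thing. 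Applying the $P_1$ estimate to $P_2u$ and then the $P_2$ estimate to $u$ pays this half-derivative loss \emph{twice}, yielding $\tau^{6}\|e^{\tau\varphi}u\|_{L^2}^2\lesssim\|e^{\tau\varphi}Pu\|_{L^2}^2$ -- which is nothing more than Lemma \ref{lem2.1} with $m=2$ (and holds even for $\Delta^2$) -- whereas \eqref{equ3.3} asserts $\tau^{7}$ at unit scale (the $L^2$ piece of the left side is $\tau^{-1}\tau^{8}\|e^{\tau\varphi}u\|_{L^2}^2$), i.e.\ a total loss of only half a derivative for the \emph{fourth order} operator. Your claim that ``iterating twice produces the $\tau^{-1}$'' conflates squared and unsquared norms: two compositions give $\tau^{-1}$ in unsquared norms, which is $\tau^{-2}$ squared, one full power of $\tau$ short of \eqref{equ3.3} (and the polynomial factor accumulates to $(r|\nabla\varphi|)^{-1}$ rather than $(r|\nabla\varphi|)^{-1/2}$). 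This missing power is not cosmetic: in the proof of Theorem \ref{thm1.2} the absorption step requires $\tau^{7}\gtrsim R^{8}$, giving $\tau\sim R^{8/7}$, while a $\tau^{6}$ estimate only gives $\tau\sim R^{4/3}$, so your route reproduces Theorem \ref{thm1.1} and cannot reach the $\frac{8}{7}+\epsilon$ bound.

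The paper's proof avoids composition entirely: after reducing by scaling to $u\in C_0^\infty(\frac12<|x|<1)$ as in \cite{CGT}, it proves a pseudo-convexity bound for the full fourth order conjugated symbol, $\{\Re p_\tau,\Im p_\tau\}\geq \frac{C}{r}(|\xi|+\tau|\nabla\varphi|)^{7}$ on $p_\tau=0$, using the identity $\{\Re p_\tau,\Im p_\tau\}=\{\Re p_{1,\tau},\Im p_{1,\tau}\}|p_{2,\tau}|^2+\{\Re p_{2,\tau},\Im p_{2,\tau}\}|p_{1,\tau}|^2$ on the characteristic set. The hypothesis $b\neq 1$ is the crucial structural input you do not exploit: it makes the characteristic sets of $p_{1,\tau}$ and $p_{2,\tau}$ disjoint, so on $\{p_\tau=0\}$ exactly one factor vanishes while the other is elliptic and contributes its full square $|p_{i,\tau}|^2\gtrsim(|\xi|+\tau|\nabla\varphi|)^4$; only the vanishing factor pays the subelliptic bracket of order $3$, and the condition on $\alpha$ guarantees that bracket (for $p_{2,\tau}$) is strictly positive. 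This is how the exponent $7$ (hence the single $\tau^{-1}$ in \eqref{equ3.3}) is obtained, and it is exactly the gain that a factor-by-factor composition can never see -- consistent with Example \ref{exa3.2}(ii), which shows that for the repeated factor $\Delta^2$ no such weight exists. To repair your argument you would have to abandon the two-step composition and prove the product-level bracket inequality \eqref{equ3.4} directly, then invoke the standard Carleman machinery for the fourth order operator.
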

\begin{proof}
We first note that it suffices to establish \eqref{equ3.3} for $u\in C_0^{\infty}(\frac12<|x|<1)$, since if this is true, then the same scaling arguments as in \cite{CGT} will imply \eqref{equ3.3}. To this end, we shall prove that $\varphi$ satisfies the following form of strongly pseudo-convex condition
\begin{equation}\label{equ3.4}
\{\Re{p_{\tau}}, \Im{p_{\tau}}\}\geq \frac{C}{r}(|\xi|+\tau|\nabla\varphi|)^7 \quad \text{on} ~~ p_{\tau}=0,
\end{equation}
where $p(\xi)=(\xi_1^2+\xi_2^2)(\xi_1^2+b\xi_2^2)$, and $p_{\tau}=p(\xi+i\tau\nabla\varphi)$. In fact, we notice that the following identity holds
\begin{align*}
\{\Re{p_{\tau}}, \Im{p_{\tau}}\}=\{\Re{p_{1, \tau}}, \Im{p_{1, \tau}}\}|p_{2, \tau}|^2+\{\Re{p_{2, \tau}}, \Im{p_{2, \tau}}\}|p_{1, \tau}|^2, \quad \text{on } ~~p_{\tau}=0.
\end{align*}
 The condition $b\neq 1$ indicates  that $Char P_{1, \tau}\bigcap Char P_{2, \tau}=\emptyset$. We note that by homogeneity consideration one can let $\tau=1$. Without loss of generality, we can assume $p_{2, \tau}=0$, that is
\begin{equation}\label{equ3.5}
\begin{gathered}
\xi_1^2+b\xi_2^2=(\partial_{x_1}\varphi)^2+b(\partial_{x_2}\varphi)^2,\\
\xi_1\partial_{x_1}\varphi+b\xi_2\partial_{x_2}\varphi=0,
\end{gathered}
\end{equation}
which implies that
$$
\xi_1^2+(\partial_{x_1}\varphi)^2=b(\xi_2^2+(\partial_{x_2}\varphi)^2),
$$
so
\begin{align}\label{equ3.6}
|p_{1, \tau}|^{2} &=(|\xi|^2-|\nabla\varphi|^2)^2+4(\xi\cdot \nabla\varphi)^2\nonumber \\
&=(b-1)^2(\xi_2^2+(\partial_{x_2}\varphi)^2)^2 \nonumber\\
&\geq C_b(|\xi|+|\nabla\varphi|)^4.
\end{align}
On the other hand, if we denote by
$$
\xi_b=(\xi_1,~ b\cdot\xi_2)^{T}, \quad \nabla\varphi_b=(\partial_{x_1}\varphi,~ b\cdot\partial_{x_2}\varphi)^{T},
$$
one has
\begin{align*}
\{\Re{p_{2, \tau}}, \Im{p_{2, \tau}}\}&=-2\alpha|x|^{-\alpha-2}(|\xi_b|^2+|\nabla\varphi_b|^2)\\
&+2\alpha(\alpha+2)|x|^{-\alpha-4}[(x\cdot\xi_b)^2+(x\cdot\nabla\varphi_b)^2]
\end{align*}
Thanks to \eqref{equ3.5}, we have $x\cdot\xi_b=0$ and $|\xi_b|^2=b\alpha^2|x|^{-2\alpha-2}$, thus, we obtain
\begin{align*}
\{\Re{p_{2, \tau}}, \Im{p_{2, \tau}}\}&=2\alpha^3|x|^{-3\alpha-4}[(\alpha+2)(\frac{x_1^2+bx_2^2}{|x|^2})^2-b-\frac{x_1^2+b^2x_2^2}{|x|^2}],
\end{align*}
it follows from our assumption on $\alpha$ that there exists a positive constant c (depending on $b$) such that
$$
(\alpha+2)(\frac{x_1^2+bx_2^2}{|x|^2})^2-b-\frac{x_1^2+b^2x_2^2}{|x|^2}>c>0.
$$
Note also that on $p_{2, \tau}=0$, one has  the  relation $|\xi|\sim |\nabla\varphi|$, hence we have
\begin{equation}\label{equ3.65}
\{\Re{p_{2, \tau}}, \Im{p_{2, \tau}}\}\geq C\frac{(|\xi|+|\nabla\varphi|)^3 }{|x|}.
\end{equation}
Combine \eqref{equ3.6} and \eqref{equ3.65}, we obtain \eqref{equ3.4}, and the desired Carleman estimates follows by standard arguments (see e.g. \cite{CGT}, \cite{H}).
\end{proof}

\begin{proof}[Proof of Theorem \ref{thm1.2}]
For any $0<\epsilon<1$, set $b=1+\frac{\epsilon}{2}$, $\varphi(r)=r^{-\epsilon}$. Applying Lemma \ref{lemma3.3} to $P=(\partial_{x_1}^2+b\partial_{x_2}^2)\Delta$, one has
$$
\tau^7\int{|\nabla\varphi|^7|x|^{-1}e^{2\tau\varphi}|u|^2}\leq C\int{e^{2\tau\varphi}|u|^2}, \quad u\in C_0^{\infty}(B(0, 10)\setminus \{0\}),
$$
which plays the same role as Lemma \ref{lem2.1} in the proof of Theorem \ref{thm1.1}. Following the same way in the previous section we prove that
$$
M(R)\gtrsim \exp\{-CR^{\frac87+\epsilon}\log{R}\},
$$
which finishes the first part of Theorem \ref{thm1.2}.

We end the proof by constructing  a Meshkov type example to show that the bound $\frac87$ is optimal. For simplicity, we shall assume
$P=\Delta_{\mathbb{R}^2}(\partial_{x_1}^2+2\partial_{x_2}^2)$. The key point is the following observation
\begin{proposition}\label{pro3.4}
Suppose $\rho>0$ is large enough, choose $n,k\in \mathbb{Z}$ such that $|n-\rho|\leq 1$, $|k-8\rho^{\frac87}|\leq 6$. Then in the annulus $\rho\leq |x|\leq 7\rho^{\frac37}$, there exists a solution u satisfying \eqref{1.1} and \eqref{1.2} such that the following properties hold\\
(i) If $r\in [\rho, \rho+0.1\rho^{\frac37}]$, then $u(r,\varphi)=r^{-n}e^{-in\varphi}$. If $r\in [\rho+6.9\rho^{\frac37}, \rho+7\rho^{\frac37}]$, then $u(r,\varphi)=ar^{-n-k}e^{-i(n+k)\varphi}$, where $a$ is some positive constant.\\
(ii)Let $m(r)=\max\{|u(r, \varphi)|, 0\leq \varphi\leq 2\pi\}$, there exists an absolute constant C, which does not depend on $\rho, n, k$ such that for
$r\in [\rho, \rho+7\rho^{\frac37}]$,
\begin{equation}\label{equ3.7}
\ln{m(r)}-\ln{m(\rho)}\leq -C\int_{\rho}^{r}{t^{\frac17}\, dt}+C.
\end{equation}
\end{proposition}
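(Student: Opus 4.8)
The plan is to adapt Meshkov's construction \cite{M} to the fourth order operator $P=\Delta P_{2}=\Delta(\partial_{x_{1}}^{2}+2\partial_{x_{2}}^{2})$ (the method of \cite{CGT} being used only for the easier Carleman half of Theorem \ref{thm1.2}). Writing $P_{2}=\Delta+\partial_{x_{2}}^{2}$ and passing to polar coordinates, one checks that $\partial_{x_{2}}$ sends the angular mode $e^{-im\varphi}$ into a combination of $e^{-i(m\pm1)\varphi}$, so $P=\Delta(\Delta+\partial_{x_{2}}^{2})$ sends mode $m$ into modes $m$ and $m\pm2$ only, and the two residue classes modulo $2$ decouple. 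I will therefore seek the solution as a finite sum $u(r,\varphi)=\sum_{\nu}g_{n+2\nu}(r)e^{-i(n+2\nu)\varphi}$; for $u=\sum_{m}g_{m}e^{-im\varphi}$ one obtains a formula of the shape $(Pu)_{m}=L_{m}\big(L_{m}g_{m}+R_{m}g_{m}+S_{m}g_{m-2}+T_{m}g_{m+2}\big)$, where $L_{m}=\partial_{r}^{2}+\frac{1}{r}\partial_{r}-\frac{m^{2}}{r^{2}}$ and $R_{m},S_{m},T_{m}$ are radial operators of order at most two. Since $r^{-\ell}e^{-i\ell\varphi}=z^{-\ell}$ is harmonic we have $L_{\ell}(r^{-\ell})=0$, which is exactly what keeps the leading term $L_{m}(L_{m}g_{m})$ small as long as $g_{m}$ stays close to the decaying radial profile $r^{-m}$ of its own mode.

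First I would build $u$ on the transition region $\rho+0.1\rho^{3/7}\le|x|\le\rho+6.9\rho^{3/7}$ by concatenating elementary steps on thin concentric sub-annuli, each step raising the active angular frequency from some value $\ell$ to $\ell+2$ while guiding the dominant amplitude to track $r^{-\ell}e^{-i\ell\varphi}$; the widths of the sub-annuli and the sizes of the frequency jumps are calibrated so that the instantaneous frequency $\ell(r)$ grows at the maximal admissible rate $\ell'(r)\sim r^{5/7}$, reaching $\ell\approx n+k\approx 8\rho^{8/7}$ near the outer end. At each interface $u$ is a single harmonic mode, so $Pu=0$ there, which lets me glue the steps in $C^{\infty}$ (matching finitely many radial derivatives) and extend $V:=-Pu/u$ by $0$ across every interface and into $\{|x|\le\rho\}$, $\{|x|\ge\rho+7\rho^{3/7}\}$, where $u$ is taken to be the prescribed pure mode $r^{-n}e^{-in\varphi}$, respectively $a\,r^{-n-k}e^{-i(n+k)\varphi}$ with $a>0$ fixed by continuity; this gives (i). Normalising the amplitudes so that $\frac{d}{dr}\log m(r)\approx-\ell(r)/r$, and using that $\ell(r)\sim\rho^{8/7}$ while $r\sim\rho$ on the transition region, so that $-\ell(r)/r\lesssim-r^{1/7}$ there, one obtains the decay estimate \eqref{equ3.7} by integrating.

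The hard part will be keeping the potential bounded, $|V|=|Pu|/|u|\le C$ with $C$ independent of $\rho,n,k$. Two interlocking difficulties arise. First, a winding number count forbids a nowhere-vanishing homotopy between $e^{-in\varphi}$ and $e^{-i(n+k)\varphi}$, so $u$ must vanish somewhere in the transition region, and near each zero one must force $Pu$ to vanish to the same order as $u$; following \cite{M}, and using that at each transition radius the zeros occur in a rotationally symmetric pattern, this reduces to imposing a fixed finite number of linear relations among the values and low order radial derivatives of the two amplitudes that are active there, which still leaves room to control them elsewhere. Second -- and this is genuinely more delicate than in Meshkov's second order case -- $P$ carries four derivatives but shifts the angular frequency only by two, so $u$ is forced to climb through every intermediate mode; one must verify that every mode of $Pu$, including the spurious modes $\ell\pm2$ produced by $\partial_{x_{2}}^{2}$ and all the terms coming from $L_{m}(L_{m}g_{m})$, is $\le C|u|$, which forces the amplitudes $g_{m}$ to be simultaneously well separated in frequency and mild in their radial derivatives up to fourth order, and it is precisely this constraint that pins the admissible rate $\ell'(r)\sim r^{5/7}$, hence the exponent $8/7$. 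Once $|V|\le C$ is in hand, $|u|\le C$ is immediate and the radial bookkeeping above gives \eqref{equ3.7}.
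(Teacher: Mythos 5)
Your plan rests on a structural premise that is not needed and, more importantly, on a scheme that fails quantitatively. Nothing forces $u$ to climb through every intermediate angular mode: $V=-Pu/u$ is an arbitrary bounded function, so it can couple any two modes, and the mod-$2$ mode structure of $P$ places no constraint on the construction. The paper (following Meshkov \cite{M}) raises the frequency from $n$ to $n+k$ within a single annulus by a phase modulation $e^{-iF(\varphi)}$, $F=(n+2k)\varphi+\Phi(\varphi)$, which activates a whole band of modes at once. Your ladder of single-mode steps of size $+2$ on thin sub-annuli cannot work in the relevant regime: gluing a two-mode transition by radial cutoffs of width $w_s$ at frequency level $N$ produces terms in $Pu$ of size $w_s^{-1}(N/r)^3|u|$ (these are exactly the terms $\frac{n^3}{r^3}\rho^{-3/7}|u_2|$ estimated in Step 1 of the paper), so $|V|\le C$ forces $w_s\gtrsim N^3/r^3\sim\rho^{3/7}$ once $N\sim\rho^{8/7}$, $r\sim\rho$. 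Thus a single $+2$ step already consumes essentially the whole annulus, and raising the frequency by $k\sim\rho^{4/7}$ in steps of $2$ would need total width $\gtrsim\rho^{4/7}\cdot\rho^{3/7}=\rho$, far exceeding $7\rho^{3/7}$. The same balance ($N^3k\lesssim r^4$ together with $w\gtrsim N^3/r^3$) is what actually pins the exponent $8/7$; your asserted maximal climb rate $\ell'(r)\sim r^{5/7}$ is not derived from any estimate and contradicts it (the admissible growth is $\sim r^{1/7}$, consistent with $N(r)\sim r^{8/7}$). Relatedly, your decay bookkeeping is inconsistent: if $\ell(r)$ really started at $n\approx\rho$, then $-\ell(r)/r\approx-1$, not $\lesssim -r^{1/7}$, on an inner portion of the annulus, and \eqref{equ3.7} would fail there. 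The parameters the paper actually iterates with are $n\approx\rho^{8/7}$ and $k\approx 8\rho^{4/7}$ (cf.\ the bound $k_j\le 8\rho_j^{4/7}+6$ in the continuation of the proof of Theorem \ref{thm1.2}); you calibrated your scheme to the misprinted values in the statement instead of detecting the inconsistency.

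Beyond the scheme, the proposal defers precisely the content that constitutes the proof. What distinguishes this proposition from Meshkov's second-order example is the verification that his building blocks, reglued with cutoffs adapted to the new widths $\sim\rho^{3/7}$ and jumps $k\sim\rho^{4/7}$, satisfy $|(\partial_{x_1}^2+2\partial_{x_2}^2)\Delta u|\le C|u|$: the smallness $|\Delta u|\lesssim \frac{nk}{r^2}|u|\sim\rho^{-2/7}|u|$ must absorb two further derivatives, each costing up to $n/r\sim\rho^{1/7}$, and this term-by-term polar-coordinate computation is Steps 1--4 of the paper. Your text replaces all of it by ``one checks'' and ``one must verify''. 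Likewise, the zeros of $u$ are not handled by ``finitely many linear relations'' near each zero: in Meshkov's construction the sectors where cancellation between the two competing modes can occur are exactly the sectors where $\Phi$ is linear, so $u$ is harmonic there, hence $Pu=P_2\Delta u=0$ and $V$ may be taken to vanish, zeros or not. As it stands your proposal is an outline built on a miscalibrated scheme, not a proof of the proposition.
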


\begin{proof}
We remark here that the constants $C, C_j$ appeared in the proof below can all be chosen independent of $\rho, n, k$.

Step 1 (when $\rho\leq |x|\leq 2\rho^{\frac37}$) The solution $u_1=r^{-n}e^{-in\varphi}$ is rearranged to $u_2=-br^{-n+2k}e^{-iF(\varphi)}$,
 where $b=(\rho+\rho^{\frac37})^{-2k}$, $F(\varphi)=(n+2k)\varphi+\Phi(\varphi)$. Moreover, $\Phi(\varphi)$ satisfies
 \begin{align}
 |\Phi^{(j)}(\varphi)|\leq C\rho^{\frac87j-\frac47}, \quad j=0,1,2,\ldots.\label{equ3.8}\\
 \Phi(\varphi)=-4k\varphi+b_m, \quad \text{in} \quad |\varphi-\varphi_m|\leq \frac T5,\label{equ3.9}
 \end{align}
  where $b_m\in \mathbb{R}$, $T=\frac{\pi}{n+k}$, $\varphi_m=mT$, $m=0,1,\ldots, 2(n+k)-1$. For the existence of such $\Phi$, see \cite{M} for the detail. Next there exists  $C^{\infty}$ functions $\psi_1, \psi_2$ with $\psi_1(r)=1$, if $r\leq \rho+\frac{13}{7}\rho^{\frac37}$, $\psi_1(r)=0$, if $r\ge \rho+1.9\rho^{\frac37}$, and $\psi_2(r)=1$, if $r\ge \rho+\frac{1}{7}\rho^{\frac37}$, $\psi_2(r)=0$, if $r\leq \rho+0.1\rho^{\frac37}$ such that
  \begin{equation}\label{equ3.10}
  \psi_{k}^{j}(r)\leq C \rho^{-\frac37 j}, \quad k=1,2, \quad j=0, 1, 2,\ldots
  \end{equation}
  Now set $u=\psi_1u_1+\psi_2u_2$, by \eqref{equ3.9}, we have $\Delta u=0$ in the region
  $$
  \{\rho+\frac{1}{7}\rho^{\frac37}\leq r\leq \rho+\frac{13}{7}\rho^{\frac37}, |\varphi-\varphi_m|\leq \frac T5\},\quad  m=0, 1,\ldots 2(n+k)-1.
  $$
  Since $|\frac{u_2}{u_1}|=(\frac{r}{\rho+\rho^{\frac37}})^{2k}$, it follows that there exists some positive constant C (say 10), such that
  \begin{align}
  |\frac{u_2}{u_1}|&\leq e^{-C}, \quad r\in[\rho, \rho+\frac{1}{7}\rho^{\frac37}], \label{equ3.11}\\
  |\frac{u_2}{u_1}|&\ge e^{-C}, \quad r\in[\rho+\frac{13}{7}\rho^{\frac37}, \rho+2\rho^{\frac37}]. \label{equ3.12}
  \end{align}
  First we consider the annulus $\rho\leq |x|\leq \frac17 \rho^{\frac37}$, then it's easy to see from \eqref{equ3.11} that
  \begin{equation}\label{equ3.13}
   |u|\ge e^C|u_2| \quad \text{for some} \quad C>0.
  \end{equation}

 Now we estimate $(\partial_{x_1}^2+2\partial_{x_2}^2)\Delta u$ in this region. Note that in the polar coordinates, one has
 \begin{align*}
 \Delta_{\mathbb{R}^2}&=\frac{\partial^2}{\partial r^2}+\frac1r\frac{\partial}{\partial r}+\frac{1}{r^2}\frac{\partial^2}{\partial \varphi^2},
 \end{align*}
 \begin{align*}
 \partial_{x_1}^2+2\partial_{x_2}^2 &=(1+\sin^2\varphi)\frac{\partial^2}{\partial r^2}+(\frac{1+\cos^2\varphi}{r}+\frac{\sin{2\varphi}}{r}\frac{\partial}{\partial \varphi})\frac{\partial}{\partial r}\\&-\frac{\sin{2\varphi}}{2r^2}\frac{\partial}{\partial \varphi}+\frac{1+\cos^2\varphi}{r^2}\frac{\partial^2}{\partial \varphi^2}. \nonumber
  \end{align*}
 Then we write
  $$\Delta u=\Delta(\psi_2u_2)=\psi_2\Delta u_2+2\frac{\partial \psi_2}{\partial r}\frac{\partial u_2}{\partial r}+u_2\Delta\psi_2,$$
  and
  \begin{equation}
  \Delta u_2=\frac{g_1(\varphi)}{r^2}\cdot u_2, \nonumber
  \end{equation}
  where $g_1(\varphi)=(n-2k)^2-(n+2k+\Phi^{'})^2+i\Phi^{''}$, hence, it follows from \eqref{equ3.8} that
  $$
  |\frac{ \Delta u_2}{u_2}|\leq \frac{nk}{r^2}\leq C\rho^{-\frac27}.
  $$
  A further computation shows that
    \begin{align*}
    |\frac{\partial^2}{\partial r^2}(\frac{g_1(\varphi)}{r^2} u_2)|&=|\frac{g_1(\varphi)(n-2k)(n-2k+1)}{r^4}u_2|\\
    &\leq C\frac{n^3k}{r^4}|u_2| \leq C|u_2|,
  \end{align*}

  \begin{align*}
    |\frac{\partial^2}{\partial r^2}(\frac{\partial \psi_2}{\partial r} \frac{\partial u_2}{\partial r})|&=|\frac{\partial^3 \psi_2}{\partial r^3} \frac{\partial u_2}{\partial r}+2\frac{\partial^2 \psi_2}{\partial r^2} \frac{\partial^2 u_2}{\partial r^2}+\frac{\partial \psi_2}{\partial r} \frac{\partial^3 u_2}{\partial r^3}|\\
    &\leq C(\frac{n}{r}\cdot\rho^{-\frac97}+\frac{n^2}{r^2}\cdot\rho^{-\frac67}+\frac{n^3}{r^3}\cdot\rho^{-\frac37})|u_2|\\
    & \leq C|u_2|,
  \end{align*}

  \begin{align*}
   |\frac 1r\frac{\partial}{\partial \varphi}\frac{\partial}{\partial r}(\frac{g_1(\varphi)}{r^2} u_2))|&=|\frac{-2g_1'(\varphi)}{r^4} u_2+\frac{g_1(\varphi)}{r^4} \frac{\partial u_2}{\partial r}+\frac{g_1'(\varphi)}{r^3} \frac{\partial u_2}{\partial r}+\frac{g_1(\varphi)}{r^3} \frac{\partial^2 u_2}{\partial r\partial \varphi}|\\
   &\leq C\rho^{-\frac37}\frac{n^3}{r^3}|u_2|\leq C|u_2|,
  \end{align*}

 \begin{align*}
   |\frac 1r\frac{\partial}{\partial \varphi}\frac{\partial}{\partial r}(\frac{\partial \psi_2}{\partial r}\frac{\partial u_2}{\partial r})|&\leq
    C\rho^{-\frac37}\frac{n^3}{r^3}|u_2|\leq C|u_2|,
  \end{align*}

  and
  \begin{align*}
    |\frac{1}{r^2}\frac{\partial^2}{\partial \varphi^2}(\frac{g_1(\varphi)}{r^2} u_2)|& =|-\frac{((F')^2+iF^{''})g_1-iF'g_1'+g_1^{''}}{r^4} u_2| \\
    &\leq C\frac{n^3 k+n\Phi^{'''}+\Phi^{''}\Phi^{'}+n^3\rho^{\frac{20}{7}+\rho^4}}{r^4}|u_2|\\
    &\leq C|u_2|,
   \end{align*}
   where we have used the fact that $|g_1'(\varphi)|\leq C \rho^{\frac{20}{7}}$, and $|g_1''(\varphi)|\leq C \rho^4$.
  After a direct computation of other terms, we obtain the following
  \begin{equation}\label{equ3.14}
  |\frac{(\partial_{x_1}^2+2\partial_{x_2}^2)\Delta u_2}{u_2}|\leq C, \quad \rho\leq |x|\leq 2\rho^{\frac37}.
  \end{equation}
  Combining \eqref{equ3.13} and \eqref{equ3.14}  yields
  \begin{equation}\label{equ3.15}
  |\frac{(\partial_{x_1}^2+2\partial_{x_2}^2)\Delta u}{u}|\leq C,\quad \rho\leq |x|\leq \frac17 \rho^{\frac37}.
  \end{equation}
  Similarly, by \eqref{equ3.12} and arguments above, it follows that \eqref{equ3.15} is valid when $\rho+\frac{13}{7}\rho^{\frac37}\leq |x|\leq 2\frac17 \rho^{\frac37}$. In the remaining annulus sectors
  \begin{align*}
  P_m&=\{(r, \varphi),\rho+\frac{1}{7}\rho^{\frac37}\leq r\leq \rho+\frac{13}{7}\rho^{\frac37}, \varphi_m+\frac T5\leq \varphi\leq \varphi_m+\frac {4T}5\}, \\
  m&=1, 2,\ldots 2(n+k)-1.
 \end{align*}
 One argues the same as in \cite{M} and in this region, we have
 $$
 |u|\ge C|u_2|
 $$
 Using the fact that $u_1$ is harmonic in $P_m$ and $u_2$ satisfies \eqref{equ3.14}, we conclude that \eqref{equ3.15} is also valid in each $P_m$.

 Step 2 (when $\rho+2\rho^{\frac37}\leq |x|\leq \rho+3\rho^{\frac37}$).  The solution $u_2=-br^{-n+2k}e^{-iF(\varphi)}$ is rearranged to $u_3=-br^{-n+2k}e^{i(n+2k)\varphi}$. Let $\psi\in C^{\infty}$, and $\psi(r)=1$, if $r\leq \rho+\frac{15}{7}\rho^{\frac37}$, $\psi(r)=0$, if $r\ge \rho+\frac{20}{7}\rho^{\frac37}$, such that
 $$
  \psi^{j}(r)\leq C_j, \quad r\in (0, \infty) \quad j=0, 1, 2,\ldots
 $$
 Now set
 $$u=-br^{-n+2k}\exp[i(\psi\Phi(\varphi)+(n+2k)\varphi)],$$
 we have
 $$
 \Delta u =g_2(r, \varphi)u,
 $$
 where
 \begin{align*}
 g_2&=\frac{(-8nk-2(n+k)\psi\Phi'+(\psi\Phi')^2+i\psi\Phi'')}{r^2}\\
 &+i(-2n+4k+1)\frac{\psi'\Phi}{r}+i\Phi(\frac{\psi'}{r}+\psi'')-(\psi'\Phi)^2
 \end{align*}
 it then follows that
 $$
 |g_2(r,\varphi)|\leq C\frac{nk}{r^2}\leq C\rho^{-\frac27}.
 $$
 To estimate $\frac{(\partial_{x_1}^2+2\partial_{x_2}^2)(g_2u)}{u}$ in this region we note that
   \begin{align*}
    |\frac{\partial^2 }{\partial r^2}(g_2 u)| \leq C\rho^{-\frac27}\frac{n^2}{r^2} |u|\leq C|u|,
   \end{align*}

  \begin{align*}
   |\frac 1r \frac{\partial}{\partial \varphi}\frac{\partial}{\partial r}(g_2 u)|&=|\frac{1}{r}(\frac{\partial^2 g_2}{\partial \varphi\partial r}u+g_2\frac{\partial^2 u}{\partial \varphi\partial r}+\frac{\partial g_2}{\partial \varphi}\frac{\partial u}{\partial r}+\frac{\partial g_2}{\partial r}\frac{\partial u}{\partial\varphi})|\\
    &\leq C(\frac{n^2}{r^2}\rho^{-\frac27}+\frac{n}{r^2}\rho^{-\frac67}) |u|\leq C|u|,
  \end{align*}
  and
   \begin{align*}
    |\frac{1}{r^2}\frac{\partial^2}{\partial \varphi^2}(g_2 u)|&=|\frac{1}{r^2}(\frac{\partial^2 g_2}{\partial \varphi^2}u+\frac{\partial^2 u}{\partial \varphi^2}g_2+2\frac{\partial g_2}{\partial \varphi}\frac{\partial u}{\partial \varphi})| \\
    &\leq C(\frac{\rho^2}{r^2}+\frac{n^2}{r^2}\rho^{-\frac27}+\frac{n}{r^2}\rho^{\frac67})|u|\\
    &\leq C|u|,
   \end{align*}
   where we have used the fact that $|\frac{\partial^jg_2}{\partial r^j}|\leq C \rho^{-\frac27}$, $|\frac{\partial^jg_2}{\partial \varphi^j}|\leq C \rho^{-\frac27+\frac87 j}$, when $j=0, 1, 2$.
   Another direct computation shows that other terms are also controlled by $C|u|$, which implies that \eqref{equ3.15} is valid in $\rho+2\rho^{\frac37}\leq |x|\leq \rho+3\rho^{\frac37}$.

   Step 3 (when $\rho+3\rho^{\frac37}\leq |x|\leq \rho+4\rho^{\frac37}$).  The solution $u_3=-br^{-n+2k}e^{i(n+2k)\varphi}$ is rearranged to $u_4=-b_1r^{-n-2k}e^{i(n+2k)\varphi}$.  First choose $\psi_3\in C^{\infty}$, and $\psi_3(r)=1$, if $r\leq \rho+\frac{22}{7}\rho^{\frac37}$, $\psi_3(r)=0$, if $r\ge \rho+\frac{27}{7}\rho^{\frac37}$, such that $\psi_3^{(j)}(r)\leq C_j$,  if $r\in (0, \infty), j=0, 1, 2,\ldots$. Let
   $g_3(r)=(\frac{\rho+3\rho^{\frac37}}{r})^{4k}$, then it follows that in $\rho+3\rho^{\frac37}\leq |x|\leq \rho+4\rho^{\frac37}$,
   $$
   |g_3^{(j)}(r)|\leq C\frac{k}{r}^j,\quad j=0, 1, 2,\ldots.
   $$
   Next, we set $h(r)=\psi_3+(1-\psi_3)g_3(r)$, and define $u=u_3\cdot h$, so we have
   $$
   \Delta u=h \Delta u_3+2\frac{\partial h}{\partial r}\frac{\partial u_3}{\partial r}+u_3\Delta h
   $$
   and
   $$
   |\frac{\Delta u_3}{u_3}|=\frac{8nk}{r^2}\leq C \rho^{-\frac27}.
   $$
   The remaining computation is similar to Step 2, so we omit it.

   Step 4 ((when $\rho+4\rho^{\frac37}\leq |x|\leq \rho+7\rho^{\frac37}$).) The solution $u_4=-b_1r^{-n-2k}e^{i(n+2k)\varphi}$ is rearranged to $u_5=-a_1r^{-n-2k}e^{i(n+k)\varphi}$ for some constant $a_1$. This step is similar to Step 1. Chose $\psi_4, \psi_5$ with $\psi_4(r)=1$, if $r\leq \rho+6\frac{6}{7}\rho^{\frac37}$, $\psi_4(r)=0$, if $r\ge \rho+6.9\rho^{\frac37}$, and $\psi_5(r)=1$, if $r\ge \rho+4\frac{1}{7}\rho^{\frac37}$, $\psi_5(r)=0$, if $r\leq \rho+4.1\rho^{\frac37}$, we also require that $\psi_4, \psi_5$ satisfy the condition \eqref{equ3.10}. Now define $u=\psi_4u_4+\psi_5u_5$, and as in Step 1, it's not hard to see that \eqref{equ3.15} is valid in this region.
   Now proceed as the same in \cite{M}, we see the solution $u$ satisfies the (ii).
\end{proof}

\begin{proof}[Proof of Theorem \ref{thm1.2}](continuous). In order to use the Proposition \ref{pro3.4} inductively, we note that if we choose $\rho_1$ to be sufficiently large, and define $\rho_{j+1}=\rho_j+7\rho_j^{\frac37}$, and let $n_j=[\rho_j]$, and $k_j=n_{j+1}-n_j$, we have
\begin{align*}
k_j&\leq \rho_{j+1}^{\frac87}-\rho_{j}^{\frac87}+2\leq \rho_{j}^{\frac87}(1+7\rho_{j}^{-\frac47})-\rho_{j}^{\frac87}+2\\
&\leq 8\rho_{j}^{\frac47}+6.
\end{align*}
Therefore, one can choose $k_j$ such that $|k_j-8\rho^{\frac87}|\leq 6$. By using \ref{pro3.4} repeatedly, we have

\begin{align*}
\ln{m(r)}&\leq C-C\int_{\rho_1}^{r}{t^{\frac17}}\, dt+\ln{m(\rho_1)}\\
&\leq C-Cr^{-\frac87}
\end{align*}
which completes the the proof.
\end{proof}
\end{proof}
%

\section*{Acknowledgements}
The first author is very grateful to Jiuyi Zhu for many helpful discussions
and encouragement during his visit at Johns Hopkins University.

\end{document}